\documentclass[10pt]{article}

\usepackage[utf8]{inputenc}  % se compili con pdflatex
\usepackage[T1]{fontenc}
\usepackage{lmodern}
\usepackage[english]{babel}  % o italian se vuoi sillabazione italiana
\usepackage[a4paper,margin=1in]{geometry}
\usepackage{microtype}
\usepackage{graphicx}
\usepackage{booktabs}        % tabelle belle

\usepackage{amsmath,amsfonts,amssymb,amsthm}

\newcommand{\F}{\mathbb{F}}

\newtheorem{theorem}{Theorem}[section]

\newtheorem{definition}[theorem]{Definition}

\newtheorem{oss}[theorem]{Remark}
\usepackage{siunitx}
\usepackage{enumitem}        % (una sola volta)
\usepackage{xcolor}
\usepackage{csquotes}
\usepackage{subcaption}
\usepackage{placeins}
\usepackage{pdflscape}
\usepackage{rotating}

\usepackage{authblk}

\usepackage[hidelinks]{hyperref}
\hypersetup{
	colorlinks=true,
	linkcolor=blue!60!black,
	citecolor=blue!60!black,
	urlcolor=blue!60!black,
	pdfauthor={Isabella Mastroianni},
	pdftitle={Graph-Based Retrieval Strategy for RAG Systems in Hyperlinked Technical Documentation}
}
\usepackage{orcidlink} % dopo hyperref

\title{Hook-decomposable modules and their resolutions}

\author[1,2]{{Isabella Mastroianni\orcidlink{0009-0002-9866-3648}}}
\author[2]{Marco Guerra\orcidlink{0000-0002-1825-0097}}
\author[2]{Ulderico Fugacci\orcidlink{0000-0003-3062-997X}}
\author[1]{Emanuela De Negri\orcidlink{0000-0002-1825-0097}}

\affil[1]{Department of Mathematics (DIMA), University of Genova, Italy}
\affil[2]{Institute of Applied Mathematics and Information Technologies “Enrico Magenes” (IMATI), National Research Council, Italy}

\date{March 2026}

\begin{document}

\maketitle

\begin{abstract}
We compare several classes of biparameter persistence modules: $\gamma$-products of monoparameter modules, hook-decomposable modules, modules admitting a Smith-type structure theorem, and modules of projective dimension at most 1. We determine all logical implications among these classes, providing explicit counterexamples showing that the converses fail when appropriate. In particular, $\gamma$-products (i.e., hook-decomposable modules) form a very small subclass of biparameter modules, precisely the ones for which a structure theorem still holds, thus making explicit the richer structural complexity of the biparameter setting compared to the monoparameter one.
\end{abstract}

\section{Motivation and contribution}
\label{sec:objective}

Persistent homology is a fundamental tool in topological data analysis, with monoparameter persistence extensively studied and applied \cite{edelsbrunner2010computational, carlsson2021topological, carlsson2009topology, DONUT}. 
Multiparameter persistence, while offering greater discriminative power \cite{carlsson2007theory, kerber2020multi}, remains less commonly used, partly due to the lack of a simple structural interpretation of its capabilities. A natural way to investigate this difference could be to examine biparameter modules obtained from constructions on monoparameter ones, in order to understand how such “simple” biparameter modules sit inside the broader category.

In previous work \cite{mastroianni2025retrieving}, we introduced one such construction, the $\gamma$-product, and showed that the resulting biparameter modules coincide precisely with hook-decomposable modules. Motivated by this, in the present work we analyze the logical relations between four classes of biparameter persistence modules naturally arising from, or closely related to, this construction: $(i)$ products of monoparameter modules (i.e.,\ $\gamma$-products), $(ii)$ hook-decomposable modules, $(iii)$ modules for which a Smith-type structure theorem applies, $(iv)$ modules of projective dimension~1, $(v)$ free modules.
We show that $(i)$, $(ii)$, and $(iii)$ are equivalent, and that $(v)$ implies $(ii)$, which in turn implies $(iv)$; none of these last two implications has a converse. The resulting diagram of implications is given in Figure~\ref{fig:implications}, while a schematic arrangement of these classes, as it will follow from the implications proved below, is shown in Figure~\ref{fig:classifications}.

\begin{figure}
    \centering
    \includegraphics[width=\linewidth]{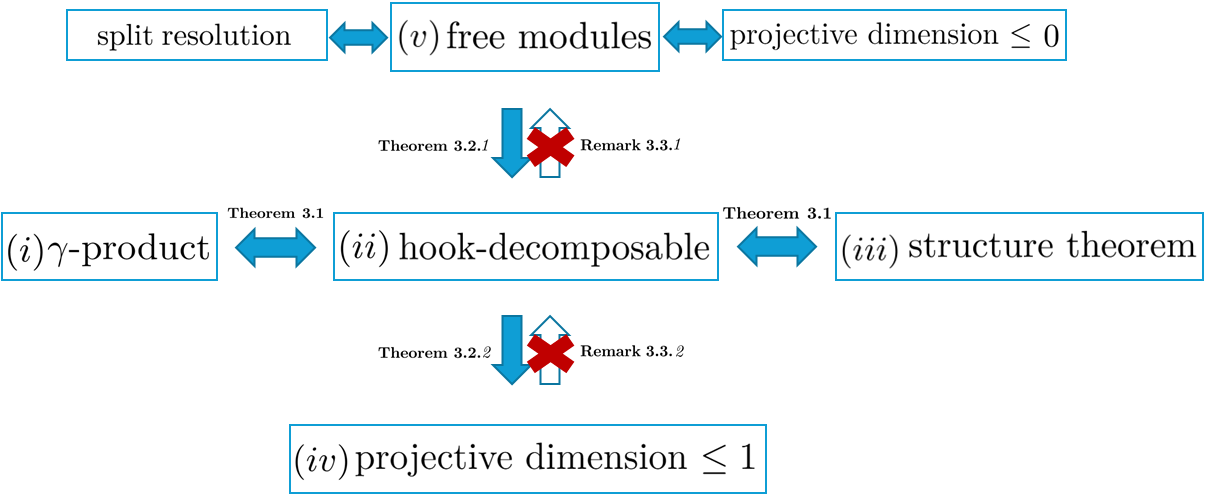}
    \caption{Implication diagram that we establish in this work.}
    \label{fig:implications}
\end{figure}

\begin{figure}
    \centering
    \includegraphics[width=0.85\linewidth]{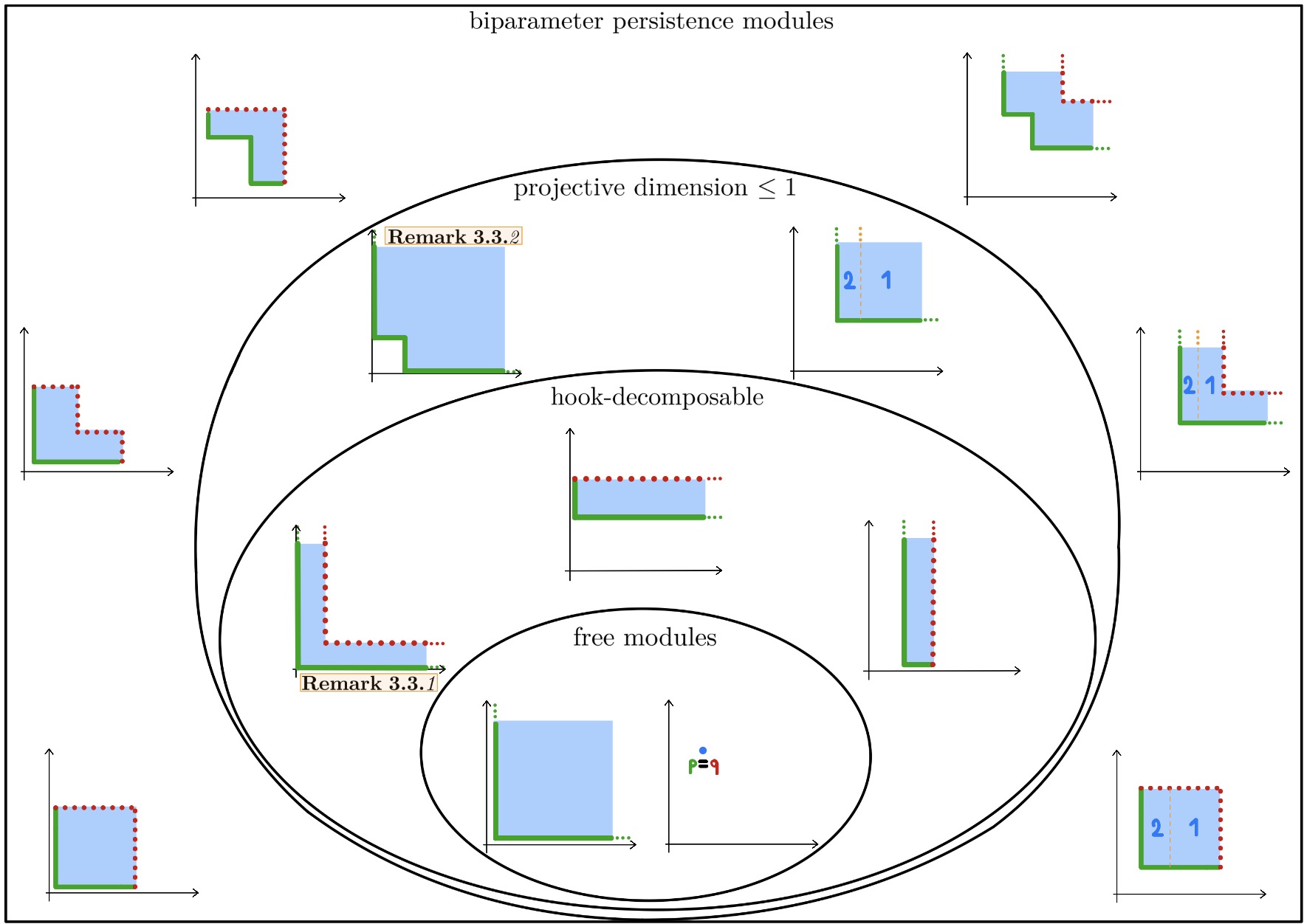}
    \caption{Visual representation of the studied classes. Examples of possible supports are shown for each one. \scriptsize Lines: green = births; red-dotted = deaths; orange-dashed = merges. Blue areas = supports interior.}
    \label{fig:classifications}
\end{figure}

\section{Background notions}

Let $\F$ be a field and \(M\) be a biparameter persistence module, i.e., a finitely generated, standard bigraded $\F[x,y]$-module. We recall the notions relevant to our analysis.

\begin{definition}\label{def}
Let $M$ be as above.
\begin{enumerate}
    \item \emph{$\gamma$-product.}  
    A construction producing a biparameter module from two monoparameter modules.  
    (We refer to \cite{mastroianni2025retrieving} for the precise definition.)

    \item \emph{Hook-decomposable modules.}  
    We call \(M\) \emph{hook-decomposable} if it is a direct sum of hook submodules, i.e.,\ interval modules \(I\) for which there exist bigrades \(\mathbf{p} \le \mathbf{q}\), with \(\mathbf{q}\) possibly having infinite coordinates, such that its support\footnote{The support of a (multi)graded module is the set of (multi)degrees in which the module is nonzero.
    } is of the form \(Supp(I)=\{\boldsymbol{\alpha}\in\mathbb{N}^2\thinspace |\thinspace \mathbf{p} \le \boldsymbol{\alpha}, \boldsymbol{\alpha} \not\ge \mathbf{q} \}.\)  
    Such class of modules is relevant due to the fact that the rank exact resolution of any module always consists of hook-decomposable modules \cite{botnan2024bottleneck, bjerkevik2025stabilizing,botnan2025signed}.

    \item \emph{Structure theorem.}
    We say that $M$ \emph{satisfies the structure theorem} if it decomposes as
    \[
        M \;\cong\;
        \bigoplus_{k=1}^f \F[x,y](-\mathbf{p}_k)
        \;\oplus\;
        \bigoplus_{j=1}^t
            \big(\F[x,y] \big/ (\mathbf{x}^{\,\mathbf{q}_j-\mathbf{p}_j})\big)(-\mathbf{p}_j),
    \]\label{eq:struct thm}
    for some bigrades $\mathbf{p}_k,\mathbf{p}_j,\mathbf{q}_j$, where \(\mathbf{x}^{\,\mathbf{q}_j - \mathbf{p}_j}=x^{q_{jx}-p_{jx}}y^{q_{jy}-p_{jy}}\), with \(\mathbf{p}_j=(p_{jx},p_{jy}), \mathbf{q}_j=(q_{jx},q_{jy})\).
    
    Equivalently, $M$ admits a Smith diagonal form. 
    
    \item \emph{Resolution.}
    A \emph{(minimal) graded free resolution} of \(M\) has the form
    \[
        0 \longrightarrow F^{\beta_2} \longrightarrow F^{\beta_1} \longrightarrow F^{\beta_0} 
        \longrightarrow M \longrightarrow 0,
    \]
    where each free module is
    \(
        F^{\beta_i} \;:=\; 
        \bigoplus_{j} \F[x,y]^{\beta_{i,j}}(-\mathbf{p}_{i,j}),
    \)
    the integers \(\beta_{i,j}\) are the graded Betti numbers of \(M\),
    and 
    \(
        \beta_i \;:=\; \sum_j \beta_{i,j}.
    \)
    Each \(\mathbf{p}_{i,j}\) denotes the bidegree of the \(j\)-th generator in \(F^{\beta_i}\).

    \item \emph{Projective dimension.}  
    The module \(M\) has \emph{projective dimension} \(\leq 1\) if and only if \(\beta_2=0\) in its minimal free resolution.

    %if $\beta_2 = 0$ we say that $M$ has \emph{projective dimension~1} if and only if $\beta_1 \neq 0$, otherwise $M$ has \emph{projective dimension~0}.
    
    % \item \emph{Split exact sequence.}  
    % A short exact sequence 
    % \(
    % 0 \to A \to B \to C \to 0
    % \)
    % \emph{splits} if \(B \cong A \oplus C\), equivalently if the injection admits a retraction or the surjection admits a section.
\end{enumerate}
\end{definition}

    \begin{oss}
        Please note that freeness (condition \((v)\)) is equivalent to projective dimension \(\leq 0\) and to the minimal free graded resolution being split short exact.
    \end{oss}

\section{Hook-decomposability: sufficient and necessary conditions}

\begin{theorem}\label{thm: hook-charact}\emph{(Characterization of hook-decomposability)}
    The following are equivalent: \((i)\,M\) is a \(\gamma\)-product; \((ii)\,M\) is hook-decomposable; \((iii)\) the structure theorem holds for \(M\). 
\end{theorem}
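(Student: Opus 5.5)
The equivalence (i)$\Leftrightarrow$(ii) was established in \cite{mastroianni2025retrieving}, where $\gamma$-products are shown to be exactly the hook-decomposable modules; we invoke it directly. The new content is (ii)$\Leftrightarrow$(iii), and the plan is to show that each hook interval module is isomorphic to exactly one of the two kinds of summand in the structure theorem, and conversely. Since both conditions are defined as direct sums of such pieces, the equivalence then follows summand by summand.

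\textbf{Hooks to Smith summands.} Let $I$ be a hook with bigrades $\mathbf{p}\le\mathbf{q}$ and support $\{\boldsymbol{\alpha}\ge\mathbf{p},\ \boldsymbol{\alpha}\not\ge\mathbf{q}\}$. Since $I$ is an interval module, all internal maps between degrees in the support are identities of $\F$, so $I$ is cyclic and generated in degree $\mathbf{p}$. This gives a surjection $\F[x,y](-\mathbf{p})\to I$. Its kernel consists of the elements of $\F[x,y](-\mathbf{p})$ in degrees $\ge\mathbf{q}$, which is the principal ideal generated by $\mathbf{x}^{\mathbf{q}-\mathbf{p}}$, shifted. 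We distinguish cases according to $\mathbf{q}$:
\begin{itemize}
\item if $\mathbf{q}=(\infty,\infty)$, the kernel is zero and $I\cong\F[x,y](-\mathbf{p})$;
\item if $\mathbf{q}$ is finite, then $I\cong(\F[x,y]/(\mathbf{x}^{\mathbf{q}-\mathbf{p}}))(-\mathbf{p})$;
\item if exactly one coordinate of $\mathbf{q}$ is infinite, say $q_y=\infty$, then no degree in $\mathbb{N}^2$ is $\ge\mathbf{q}$. The support is the whole upper quadrant at $\mathbf{p}$, and $I$ is again free.
\end{itemize}
The last case needs care: it shows that the convention on which $\mathbf{q}$ are allowed must match the paper's, since such hooks coincide with the free case.

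\textbf{Smith summands to hooks.} For the converse, a summand $\F[x,y](-\mathbf{p}_k)$ is the hook with $\mathbf{q}=(\infty,\infty)$. A summand $(\F[x,y]/(\mathbf{x}^{\mathbf{q}_j-\mathbf{p}_j}))(-\mathbf{p}_j)$ is nonzero in degree $\boldsymbol{\alpha}$ exactly when $\boldsymbol{\alpha}\ge\mathbf{p}_j$ and $\boldsymbol{\alpha}\not\ge\mathbf{q}_j$, and there it is one-dimensional, spanned by the class of a monomial. Multiplication by $x$ or $y$ sends monomials to monomials, or to zero once we leave the support. So within the support all structure maps are isomorphisms, the support is convex and connected, and the summand is the hook interval module with parameters $\mathbf{p}_j\le\mathbf{q}_j$. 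Direct sums then transfer in both directions.

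\textbf{Smith form and the main obstacle.} The phrase ``equivalently, $M$ admits a Smith diagonal form'' should also be justified. A presentation matrix with monomial diagonal entries $\mathbf{x}^{\mathbf{q}_j-\mathbf{p}_j}$, after graded row and column operations, yields precisely the displayed decomposition; zero columns correspond to free summands. I expect the main obstacle to be bookkeeping at the boundary: making sure hooks with partially infinite $\mathbf{q}$ and the trivial case $\mathbf{p}=\mathbf{q}$ (the zero module) are handled consistently. A further point is confirming that the interval-module isomorphisms respect the bigrading, so that the shifts $(-\mathbf{p})$ are correct. If (i)$\Leftrightarrow$(ii) from \cite{mastroianni2025retrieving} is stated with slightly different conventions, I would first rewrite the $\gamma$-product of two monoparameter interval modules $[a,b)$ and $[c,d)$ explicitly as a hook with $\mathbf{p}=(a,c)$ and $\mathbf{q}=(b,d)$. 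I would then use bilinearity of the $\gamma$-product over direct sums to reduce to this case.
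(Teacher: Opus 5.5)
Your proposal is correct and follows the paper's route. You cite the earlier work for (i)$\Leftrightarrow$(ii), then identify each hook summand with one of the two summand types in the structure theorem, exactly as the paper does. You are just more explicit than the paper's ``by definition'' sketch: you compute the kernel of $\F[x,y](-\mathbf{p})\to I$ as the shifted ideal $(\mathbf{x}^{\mathbf{q}-\mathbf{p}})$, and you note that hooks with a partially infinite $\mathbf{q}$ collapse to the free case, which matches the paper's parenthetical remark.
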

    
\begin{proof}[Sketch of the proof]
    The equivalence between \((i)\) and \((ii)\) was established in \cite{mastroianni2025retrieving}.
    
    For the equivalence between \((ii)\) and \((iii)\), observe that a hook summand is by definition of the form \(\F[x,y](-\mathbf{p})\) or \(\big(\F[x,y] \big/ (\mathbf{x}^{\,\mathbf{q}-\mathbf{p}})\big)(-\mathbf{p})\), where \(\mathbf{p} \le \mathbf{q}\) are the bigrades determining its support as in Definition~\ref{def} (\(\mathbf{q}\) has infinite coordinates in the first case). % \textcolor{red}{bigradi in bold}. 
    Thus, \(M\) is hook-decomposable if and only if it decomposes as in~\eqref{eq:struct thm}, that is, if and only if the structure theorem holds for \(M\).
\end{proof}

\begin{theorem}\emph{(Properties of the resolutions of hook-decomposable modules)}
    \begin{enumerate}
        \item \((v) \Rightarrow (ii)\). If \(M\) is free, then \(M\) is hook-decomposable.
        \item \((ii) \Rightarrow (iv)\). If \(M\) is hook-decomposable, then \(M\) has projective dimension \(\leq 1\). 
    \end{enumerate}
        
\end{theorem}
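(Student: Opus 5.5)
Both items follow from Theorem~\ref{thm: hook-charact}, which lets us work with the structure-theorem form~\eqref{eq:struct thm} in place of hook-decomposability.

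For item~1, let $M$ be free and finitely generated, and take a minimal homogeneous basis. This gives an isomorphism $M\cong\bigoplus_{k=1}^f \F[x,y](-\mathbf{p}_k)$, where $\mathbf{p}_k$ are the bidegrees of the basis elements. (Graded freeness is automatic here: a finitely generated graded projective module over the polynomial ring is graded free, e.g.\ by the graded Nakayama lemma.) This is exactly the decomposition~\eqref{eq:struct thm} with $t=0$. So the structure theorem holds for $M$, and by Theorem~\ref{thm: hook-charact} $M$ is hook-decomposable. Directly, each summand $\F[x,y](-\mathbf{p}_k)$ is the hook with $\mathbf{q}=(\infty,\infty)$, whose support is the upset of $\mathbf{p}_k$.

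For item~2, write $M$ in the form~\eqref{eq:struct thm}. Minimal free resolutions of finite direct sums are direct sums of minimal free resolutions, and projective dimension is the maximum over the summands. It therefore suffices to treat a single hook.

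\begin{itemize}
\item For a free summand $\F[x,y](-\mathbf{p})$, the resolution is the module itself, so the projective dimension is $0$.
\item For a torsion summand $\big(\F[x,y]/(\mathbf{x}^{\mathbf{q}-\mathbf{p}})\big)(-\mathbf{p})$, the ideal is principal, generated by the single monomial $m=\mathbf{x}^{\mathbf{q}-\mathbf{p}}$. Since $\F[x,y]$ is a domain, multiplication by $m$ is injective. This gives the exact sequence
\[
0\to \F[x,y](-\mathbf{q})\xrightarrow{\cdot m}\F[x,y](-\mathbf{p})\to \big(\F[x,y]/(m)\big)(-\mathbf{p})\to 0 .
\]
When $m\neq1$ the resolution is minimal, and $\beta_2=0$.
\end{itemize}

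Summing over all summands gives a resolution
\[
0\to\bigoplus_j \F[x,y](-\mathbf{q}_j)\to \bigoplus_k\F[x,y](-\mathbf{p}_k)\oplus\bigoplus_j\F[x,y](-\mathbf{p}_j)\to M\to 0
\]
of length at most $1$. Hence $\beta_2=0$ and $M$ has projective dimension $\le1$.

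The only points needing care are bookkeeping. First, degenerate hooks with $\mathbf{p}=\mathbf{q}$ give the zero module and are discarded. Second, one should note that hooks with exactly one infinite coordinate of $\mathbf{q}$ are not literally of either form in~\eqref{eq:struct thm}. For example, with $\mathbf{q}=(q_x,\infty)$ the support $\{\boldsymbol{\alpha}\ge\mathbf{p},\ \boldsymbol{\alpha}\not\ge\mathbf{q}\}$ is the region $\{p_x\le\alpha_x<q_x,\ \alpha_y\ge p_y\}$, and the hook is $\big(\F[x,y]/(x^{q_x-p_x})\big)(-\mathbf{p})$. This is still a quotient by a principal monomial ideal, so the same two-term resolution applies.

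I expect this last point to be the main obstacle: checking that every hook allowed by the definition is a cyclic module presented by a single monomial relation. Once that is verified, both items are immediate.
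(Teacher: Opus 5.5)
Your proof is correct and follows the paper's route. Free summands are hooks with $\mathbf{q}$ at infinity. Via Theorem~\ref{thm: hook-charact}, every hook is $\F[x,y](-\mathbf{p})$ or a quotient by a single monomial, and each is resolved by a complex of length at most one, so the direct sum has $\beta_2=0$.

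Your final bookkeeping remark is wrong, though. If $q_y=\infty$, then $\boldsymbol{\alpha}\ge\mathbf{q}$ fails for every $\boldsymbol{\alpha}\in\mathbb{N}^2$. The support is therefore the whole upset of $\mathbf{p}$, and the hook is just the free module $\F[x,y](-\mathbf{p})$; this is what the paper means by ``$\mathbf{q}$ has infinite coordinates in the first case''. The strip $\{p_x\le\alpha_x<q_x,\ \alpha_y\ge p_y\}$ you describe is instead the hook with the finite endpoint $\mathbf{q}=(q_x,p_y)$. That hook is already literally of the second form, since $\mathbf{x}^{\mathbf{q}-\mathbf{p}}=x^{q_x-p_x}$.

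So there is no extra case and no ``main obstacle''. Your conclusion is unaffected, because either way each hook is cyclic with at most one monomial relation.
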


\begin{proof}[Sketch of the proof]
\begin{enumerate}
    % \item From the hypothesis it follows that \(M\) is composed by null or free summands, hence hook summands.
    Each free summand of \(M\) is either null or has a relation at infinity.
    \item From Theorem \ref{thm: hook-charact}, it follows that each hook summand is of the form \(\F[x,y](-\mathbf{p})\) or\\ \(\big(\F[x,y] \big/ (\mathbf{x}^{\,\mathbf{q}-\mathbf{p}})\big)(-\mathbf{p})\), so their resolutions are \(0 \rightarrow \F[x,y](-\mathbf{p}) \rightarrow \F[x,y](-\mathbf{p}) \rightarrow 0\) and \(0 \rightarrow \F[x,y](-\mathbf{q}) 
        \xrightarrow{\;\mathbf{x}^{\,\mathbf{q}-\mathbf{p}}\;} 
         \F[x,y](-\mathbf{p}) 
         \rightarrow \big(\F[x,y] \big/ (\mathbf{x}^{\,\mathbf{q}-\mathbf{p}})\big)(-\mathbf{p})
         \rightarrow 0,\) respectively.
\end{enumerate}
\end{proof}

\begin{oss}
    We now exhibit two examples, included in Figure \ref{fig:classifications}, that show that the converse implications do not hold.
\begin{enumerate}
    \item The module with one generator \(g\) in bidegree \((0,0)\) and the relation \(xyg\) in bidegree \((1,1)\) is hook-decomposable yet its not free. Notably, its resolution does not split, as the injection admits no graded retraction%, yet its resolution does not split, since the injection admits no linear retraction
    .
    \item The module with generators \(g\) in bidegree $(0,1)$ and \(h\) in bidegree $(1,0)$ and the relation \(xg-yh\) in bidegree $(1,1)$ has projective dimension~1, yet it is not hook-decomposable.
\end{enumerate}
\end{oss}

\section*{Acknowledgements}
This work was carried out within the framework of the following projects:
the \textit{Programma Regionale Fondo Sociale Europeo+ 2021--2027, Priorità~2 -- Istruzione e Formazione -- ESO~4.6 (OS-f)}, with the contribution of \textit{Rulex Innovation Labs S.r.l.};
the \textit{National Recovery and Resilience Plan (NRRP)}, Mission~4, Component~2, Investment~1.5, by the \textit{European Union -- NextGenerationEU} and the \textit{Ministry of University and Research (MUR)};
the \textit{National Center for HPC, Big Data and Quantum Computing} (CN0000013);
Italian \textit{National Biodiversity Future Center (NBFC)} - \textit{National Recovery and Resilience Plan (NRRP)} funded by the \textit{European Union - NextGenerationEU} (project code CN 00000033);
\textit{Innovation Ecosystem Robotics and AI for Socio-economic Empowerment (RAISE)} -- \textit{National Recovery and Resilience Plan (NRRP)} funded by the \textit{European Union -- NextGenerationEU} (project code ECS 00000035).

free modules

\bibliographystyle{plainurl}
\bibliography{refs}

@book{edelsbrunner2010computational,
  title={Computational topology: an introduction},
  author={Edelsbrunner, Herbert and Harer, John L},
  year={2010},
  publisher={American Mathematical Society}
}

@book{carlsson2021topological,
  title={Topological data analysis with applications},
  author={Carlsson, Gunnar and Vejdemo-Johansson, Mikael},
  year={2021},
  publisher={Cambridge University Press}
}

@article{carlsson2009topology,
  title={Topology and data},
  author={Carlsson, Gunnar},
  journal={Bulletin of the American Mathematical Society},
  volume={46},
  number={2},
  pages={255--308},
  year={2009}
}

@inproceedings{carlsson2007theory,
  title={The theory of multidimensional persistence},
  author={Carlsson, Gunnar and Zomorodian, Afra},
  booktitle={Proceedings of the twenty-third annual symposium on Computational geometry},
  pages={184--193},
  year={2007}
}

@inproceedings{kerber2020multi,
  title={Multi-parameter persistent homology is practical},
  author={Kerber, Michael},
  booktitle={TDA \& Beyond},
  year={2020}
}

@article{botnan2024bottleneck,
  title={On the bottleneck stability of rank decompositions of multi-parameter persistence modules},
  author={Botnan, Magnus Bakke and Oppermann, Steffen and Oudot, Steve and Scoccola, Luis},
  journal={Advances in Mathematics},
  volume={451},
  pages={109780},
  year={2024},
  publisher={Elsevier}
}

@article{bjerkevik2025stabilizing,
  title={Stabilizing decomposition of multiparameter persistence modules},
  author={Bjerkevik, H{\aa}vard Bakke},
  journal={Foundations of Computational Mathematics},
  pages={1--60},
  year={2025},
  publisher={Springer}
}

@article{botnan2025signed,
  title={Signed barcodes for multi-parameter persistence via rank decompositions and rank-exact resolutions},
  author={Botnan, Magnus Bakke and Oppermann, Steffen and Oudot, Steve},
  journal={Foundations of Computational Mathematics},
  volume={25},
  number={5},
  pages={1815--1874},
  year={2025},
  publisher={Springer}
}

@misc{DONUT,
  author = {Giunti, Barbara and Lazovskis, J{\=a}nis and Rieck, Bastian},
  title  = {
    {DONUT}: {D}atabase of {O}riginal \& {N}on-{T}heoretical {U}ses of {T}opology
  },
  note   = {\url{https://donut.topology.rocks}},
  year   = {2022},
  key    = {DONUT},
}

@article{mastroianni2025retrieving,
  title={Retrieving biparameter persistence modules from monoparameter ones: a characterization of hook-decomposable persistence modules},
  author={Mastroianni, Isabella and Guerra, Marco and Fugacci, Ulderico and De Negri, Emanuela},
  journal={arXiv preprint arXiv:2506.14678},
  year={2025}
}

\end{document}